\definecolor{labelkey}{rgb}{0.6,0,0}
\newcommandx{\change}[2][1=]{\todo[#1]{#2}}
\newcommandx{\unsure}[2][1=]{\todo[linecolor=red,backgroundcolor=red!25,bordercolor=red,#1]{#2}}
\newcommandx{\rmk}[2][1=]{\todo[linecolor=blue,backgroundcolor=blue!25,bordercolor=blue,#1]{#2}}
\newcommandx{\info}[2][1=]{\todo[linecolor=OliveGreen,backgroundcolor=OliveGreen!25,bordercolor=OliveGreen,#1]{#2}}
\newcommandx{\improvement}[2][1=]{\todo[linecolor=Plum,backgroundcolor=Plum!25,bordercolor=Plum,#1]{#2}}
\newcommandx{\thiswillnotshow}[2][1=]{\todo[disable,#1]{#2}}
\newtheorem{thm}{Theorem}[section]
\newtheorem{define}[thm]{Definition}
\newtheorem{prop}[thm]{Proposition}
\theoremstyle{definition}
\theoremstyle{remark}
\title{An inverse problem for the space-time fractional Schr\"odinger equation on closed manifolds}
\author{Li Li \thanks{lili19940301@mail.tsinghua.edu.cn}}
\affil{Yau Mathematical Sciences Center, Tsinghua University, Beijing, China}
\date{}
\begin{document}
	
	\maketitle
	
	\noindent \textbf{ABSTRACT.}\, We formulate an inverse problem for an uncoupled space-time fractional Schr\"odinger equation on closed manifolds. Our main goal is to determine the fractional powers and the Riemannian metric (up to an isometry) simultaneously from the knowledge of the associated source-to-solution map. Our argument relies on the asymptotic behavior of  Mittag–Leffler functions, Weyl's law for the eigenvalues of the Laplace-Beltrami operator, the unique continuation property of the space-fractional operator and the disjoint data metric determination result for the wave equation. We also provide a probabilistic formulation of our inverse problem.
	
	\section{Introduction}
	The study of fractional quantum mechanics at least dates back to the late 1990s. In a series of papers, Laskin replaced the classical Laplacian (in space variables) in the Schr\"odinger equation by the fractional Laplacian, generalizing the Feynman path integral to the L\'evy one (see \cite{laskin2000fractional, laskin2002fractional}). Later, models involving (time-fractional) Caputo derivatives have been introduced in fractional quantum mechanics. We refer readers to \cite{naber2004time} for the Naber model and \cite{achar2013time} for the Achar-Yale-Hanneken model.
	
	More recently, models combining both space-fractional and time-fractional features have been introduced to describe quantum systems (see \cite{dong2008space, hislop2024edge}). In this paper, we are interested in such a kind of space-time fractional Schr\"odinger operators, and we will formulate and study a corresponding inverse problem. More precisely, we consider the following initial value problem for an uncoupled space-time fractional Schr\"odinger equation
	\begin{equation}\label{fracSchro}
		\left\{
		\begin{aligned}
			i\partial^\alpha_{t}u +(-\Delta_g)^\beta u &= f,\quad \,\,\, \mathrm{in}\,\, M\times (0, \infty),\\
			u(0)&= 0,\quad \,\,\,\mathrm{in}\,\, M.\\
		\end{aligned}
		\right.
	\end{equation}
	Here $\alpha, \beta\in (0, 1)$ are constant fractional powers.
	$(M, g)$ is a closed Riemannian manifold, and $(-\Delta_g)$ is the Laplace-Beltrami operator.
	
	To formulate our inverse problem, we need to define our measurement map. More precisely, for given nonempty open subsets $W_1, W_2 \subset M$, we define the source-to-solution map
	\begin{equation}\label{stosol}
		L^{\alpha, \beta}_{W_1, W_2}: f\to u_f|_{W_2\times (0, \infty)}, \qquad \mathrm{supp} (f)\subset W_1\times (0, \infty),
	\end{equation}
	where $u_f$ is the solution of (\ref{fracSchro}) corresponding to the source $f$. We will show that $L^{\alpha, \beta}_{W_1, W_2}$ is well-defined for sufficiently smooth $f$.
	
	Our goal is to determine $\alpha, \beta$ and the metric $g$ (up to an isometry) simultaneously from the knowledge of $L^{\alpha, \beta}_{W_1, W_2}$. The following theorem is our main result.
	
	\begin{thm}\label{Th}
		Suppose $(M, g)$, $(M, \tilde{g})$ are smooth connected closed Riemannian manifolds. Suppose $0< \alpha, \tilde{\alpha}< 1$ and $0< \beta,  \tilde{\beta}< 1$. Let $W_1, W_2\subset M$ be nonempty disjoint open subsets. Suppose $W_1$ satisfies the spectral bound condition (see (\ref{specbdcond}) below in Subsection 2.1) and $W_2$ has smooth boundary. Let $L^{\alpha, \beta}_{W_1, W_2}, \tilde{L}^{\tilde{\alpha}, \tilde{\beta}}_{W_1, W_2}$ be the source-to-solution maps corresponding to $g, \tilde{g}$.
		Suppose {$g= \tilde{g}$ in $W_1\cup W_2$} and
		\begin{equation}\label{stosoleq}
			L^{\alpha, \beta}_{W_1, W_2} f= \tilde{L}^{\tilde{\alpha}, \tilde{\beta}}_{W_1, W_2} f,\qquad
			f\in C^2_c((0, \infty); L^2(W_1)).
		\end{equation}
		Then $\alpha= \tilde{\alpha}$, $\beta=  \tilde{\beta}$, and $(M, g)$ and $(M, \tilde{g})$ are isometric Riemannian manifolds.
	\end{thm}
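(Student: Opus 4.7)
The plan is to extract, from the source-to-solution map $L^{\alpha,\beta}_{W_1,W_2}$, enough of the Dirichlet spectral data of $-\Delta_g$ (eigenvalues together with traces of eigenfunctions on $W_1,W_2$) to reduce to a known wave-equation inverse problem. The four ingredients listed in the abstract — Mittag-Leffler asymptotics, Weyl's law, UCP for the space-fractional operator, and the disjoint-data metric determination for the wave equation — correspond naturally to four distinct steps: disentangling $\alpha$ and $\lambda_k^\beta$ from the Mittag-Leffler time kernels; pinning down $\beta$ and then each individual $\lambda_k$; promoting partial spectral data to full spectral data on $W_1\cup W_2$; and finally producing the isometry.

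First I would use the spectral resolution $\{(\lambda_k,\phi_k)\}$ of $-\Delta_g$ to write $u_f(x,t) = \sum_k \phi_k(x)\int_0^t K_k^{\alpha,\beta}(t-s)\,\langle f(s,\cdot),\phi_k\rangle_{L^2(W_1)}\,ds$, where $K_k^{\alpha,\beta}(t)$ is (up to constants) $t^{\alpha-1}E_{\alpha,\alpha}(-i\lambda_k^\beta t^\alpha)$. Since $0<\alpha<1$, the argument of the Mittag-Leffler function lies outside its exponential sector, so $K_k^{\alpha,\beta}$ admits a full asymptotic expansion in powers of $(\lambda_k^\beta t^\alpha)^{-1}$ with leading term of order $\lambda_k^{-2\beta}\,t^{-\alpha-1}$. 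Probing with $f(x,t)=\chi(t)\varphi(x)$ for arbitrary $\chi\in C^2_c((0,\infty))$ and $\varphi\in L^2(W_1)$ and recording the response on $W_2$, one reads off the exponent $\alpha+1$ from the $t\to\infty$ decay rate, and by iteratively subtracting successive terms of the asymptotic expansion one recovers, on $W_2\times W_1$, the kernels $G_n(x,y):=\sum_k\lambda_k^{-n\beta}\phi_k(x)\phi_k(y)$ for all integers $n\ge 2$. The spectral bound condition on $W_1$ (preventing any nontrivial linear combination of eigenfunctions in a fixed eigenspace from vanishing there) is precisely what ensures these kernels separate eigenspaces and permits a clean peel-off.

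Comparing the two problems, matching the leading decay $t^{-\alpha-1}$ to $t^{-\tilde\alpha-1}$ forces $\alpha=\tilde\alpha$, and matching the $G_n$ on $W_2\times W_1$ for all $n$ (together with the spectral bound condition and the UCP for $(-\Delta_g)^\beta$) pins down the sequence $\{\lambda_k^\beta\}$ and the rank-one operators $(\phi_k|_{W_1})\otimes(\phi_k|_{W_2})$, identifying them with $\{\tilde\lambda_k^{\tilde\beta}\}$ and $(\tilde\phi_k|_{W_1})\otimes(\tilde\phi_k|_{W_2})$ up to an orthogonal rotation within each eigenspace. Weyl's law then gives $\lambda_k^\beta\sim C_g\,k^{2\beta/n}$ and $\tilde\lambda_k^{\tilde\beta}\sim C_{\tilde g}\,k^{2\tilde\beta/n}$ with common $n=\dim M$, so matching the power-law growth rates forces $\beta=\tilde\beta$, and consequently $\lambda_k=\tilde\lambda_k$ for every $k$.

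With the spectral data $\{(\lambda_k,\phi_k|_{W_1},\phi_k|_{W_2})\}$ matched, one builds from it the source-to-solution map of the Riemannian wave equation $(\partial_t^2-\Delta_g)v=f$ with sources in $W_1$ and measurements in $W_2$ (whose spectral representation uses the same eigenpairs), and invokes the disjoint-data metric determination theorem for the wave equation on closed manifolds to obtain an isometry between $(M,g)$ and $(M,\tilde g)$. The step I expect to be hardest is the clean spectral extraction in the second paragraph: proving that distinct eigenvalues produce asymptotically linearly independent Mittag-Leffler kernels uniformly in $k$, with sharp enough remainder control to make the iterative peel-off converge, requires delicate Mittag-Leffler estimates and relies essentially on both the spectral bound condition on $W_1$ and the UCP for $(-\Delta_g)^\beta$ to exclude the pathological scenario of eigenfunctions whose trace data on $W_1$ cannot be told apart.
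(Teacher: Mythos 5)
Your overall architecture matches the paper's: determine the time-fractional power $\alpha$ from large-time Mittag--Leffler asymptotics with the fractional unique continuation property guaranteeing a nonvanishing leading coefficient on $W_2$; extract the spectral data $\{\lambda_{j_k}^{\beta}\}$ together with the restricted eigenspace projections $P_{W_1,W_2,k}$; use Weyl's law to force $\beta=\tilde\beta$ and hence equality of eigenvalues; and finish by feeding this data into the disjoint-data wave-equation theorem of Lassas et al. Where you genuinely diverge is the spectral-extraction step: the paper takes a Laplace transform in time, obtaining the resolvent-type family $H^{\beta}_{W_1,W_2}(z)=\sum_k(\lambda_{j_k}^{\beta}+iz)^{-1}P_{W_1,W_2,k}$, and reads eigenvalues off as poles and projections as residues after meromorphic continuation; you instead propose recovering the kernels $G_n=\sum_k\lambda_{j_k}^{-n\beta}P_{W_1,W_2,k}$ for all $n\ge 2$ by iteratively peeling off the large-time asymptotic expansion and then solving a moment problem. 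The two data sets are essentially equivalent (the generating series of your $G_n$ is the paper's $H$), but the Laplace-transform route is much cleaner: your peel-off needs uniform-in-$k$ remainder control, careful bookkeeping of the double expansion in $t$ (the powers $t^{\alpha-1-j\alpha-m}$ coming from expanding the convolution can collide when $\alpha$ is rational), and a separate argument for the moment-problem inversion, none of which you supply; you correctly flag this as the hard step, but it is real outstanding work rather than a routine verification.

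Three specific inaccuracies to fix. First, you ignore the zero eigenvalue: for a generic $\varphi\in L^2(W_1)$ the $\lambda_0=0$ mode contributes a term decaying only like $t^{\alpha-1}$, which dominates your claimed $t^{-\alpha-1}$ rate and pollutes the peel-off; the paper avoids this by requiring $\langle e(\xi),1\rangle=0$, and you need the same normalization (or an explicit treatment of the constant mode, whose amplitude involves the unknown $\mathrm{vol}(M)$). Second, you misassign the role of the spectral bound condition (\ref{specbdcond}): it is a hypothesis of the wave-equation disjoint-data theorem, not what guarantees that the kernels ``separate eigenspaces''; the nonvanishing of $P_{W_1,W_2,k}$ is proved in the paper via the unique continuation property of eigenfunctions (linear independence of their restrictions to $W_1$ and $W_2$), and that is the fact your moment-problem extraction actually needs. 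Third, the objects you can recover are the eigenspace projections $P_{W_1,W_2,k}$ of rank $m_k$, not rank-one operators $(\phi_k|_{W_1})\otimes(\phi_k|_{W_2})$ when eigenvalues are multiple; your ``up to rotation'' caveat points in the right direction, and the projections are indeed all that the wave-equation representation formula requires.
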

	
	\subsection{Connection with earlier literature}
	
	Inverse spectral theory and the associated metric determination problem for hyperbolic equations have been extensively studied so far. The approach mainly relies on the boundary control method
	(see \cite{belishev1992reconstruction}) and its variant. We refer readers to the textbook \cite{kachalov2001inverse} for a detailed study of this topic. We also refer readers to \cite{lassas2014inverse, krupchyk2008inverse, helin2018correlation, li2024inverse, lassas2023disjoint} for metric determination results on manifolds with or without boundary.
	
	There is also abundant literature available on inverse problems for fractional operators. The rigorous mathematical study of inverse problems for time-fractional but space-local equations at least dates back the late 2000s (see for instance, \cite{cheng2009uniqueness, kian2021uniqueness}). The study of Calder\'on type inverse problems for space-fractional equations
	was initiated in \cite{ghosh2020calderon}. We refer readers to \cite{ghosh2020uniqueness, covi2022higher, covi2022uniqueness, lai2023inverse} and the references inside for extended results for the fractional Calder\'on problem and its variants. In particular, we mention that the Calder\'on type inverse problem for the (coupled or uncoupled) space-time fractional parabolic equations has been studied in \cite{lai2020calderon, li2022inverse}. We also mention that the determination result for exponents in the space-time fractional diffusion equation (in 1-dim case) has been obtained in \cite{tatar2016simultaneous}.  
	
	In addition, there are a few recent works on the metric determination for fractional equations. One seminal work is \cite{feizmohammadi2021fractional}, where the authors reduced the fractional elliptic problem to the classical hyperbolic problem by taking advantage of the semigroup definition of the fractional operator and the transmutation formula. Concerning the model in this paper, a more related work is \cite{helin2020inverse}, where the authors determined the metric up to an isometry from the knowledge of the source-to-solution map associated with the space–time fractional diffusion equation.
	
	In this paper, we seek a simultaneous determination of the exponents and the metric in the space-time fractional Schr\"odinger equation, and the proof of the main theorem involves a combination of different kinds of techniques. More precisely, our determination of the time-fractional power $\beta$ relies on the asymptotic behavior of Mittag–Leffler functions and the unique continuation property of the space-fractional operator;
	Our determination of the space-fractional power $\alpha$ and the metric $g$ relies on Weyl's law for the eigenvalues and the disjoint data metric determination result for the wave equation established in \cite{lassas2023disjoint}.
	
	\subsection{Organization}
	The rest of this paper is organized in the following way. In
	Section 2, we will summarize the preliminary knowledge. In Section 3, we will study the well-posedness of (\ref{fracSchro}) for sufficiently smooth sources. In Section 4, we will prove the main theorem. In Section 5, we will provide a probabilistic formulation
	of our inverse problem.
	\medskip
	
	\noindent \textbf{Acknowledgments.} The author would like to thank Professor Katya Krupchyk for suggesting the problem and for helpful discussions.
	
	\section{Preliminaries}
	Throughout this paper, the space dimension $n\geq 2$. We use $\langle\cdot, \cdot\rangle$ to denote the $L^2$-inner product, and we use $\alpha, \beta\in (0, 1)$ to denote constant fractional powers.
	
	\subsection{Laplace-Beltrami operator and Sobolev spaces}
	Let $(M, g)$ be a closed manifold. Let $0= \lambda_0< \lambda_1\leq \lambda_2<\cdots\to +\infty$ be the eigenvalues of the Laplace-Beltrami operator $-\Delta_g$. Then there exists an orthonormal basis of $L^2(M)$ consisting of the eigenfunctions $\varphi_k$ corresponding to $\lambda_k$. Recall that for $r\geq 0$,
	$$H^r(M) =  \{u \in L^2(M): \sum_{k=1}^\infty \lambda_k^{r} |\langle u, \varphi_k \rangle|^2 < +\infty\}.$$
	The fractional operator $(-\Delta_g)^r$ is defined by
	\begin{equation}\label{specfrac}
		(-\Delta_g)^r u:=\sum_{k=1}^\infty \lambda_k^r \langle u, \varphi_k \rangle\varphi_k,\qquad u\in H^{2r}(M).
	\end{equation}
	
	We say that a nonempty open subset $W\subset M$ satisfies the spectral bound condition provided that there exists a constant $C_0> 0$ such that for any basis of normalized eigenfunctions $\{\varphi_k\}$ of the Laplace-Beltrami operator, the estimate
	\begin{equation}\label{specbdcond}
		1\leq C_0||\varphi_k||_{L^2(W)}
	\end{equation}
	holds for all $k\in \mathbb{N}$. We remark that this condition holds true for an Anosov surface, regardless of the specific choice of $W$. We refer readers to \cite{lassas2023disjoint} for more details of the Anosov property.
	
	We will later use the following unique continuation property on closed manifolds.
	
	\begin{prop}\label{UCP}
		Let $r\in (0, \infty)\setminus \mathbb{N}$. Let $W\subset M$ be nonempty and open. Suppose that $u\in H^{2r}(M)$ satisfies
		$$(-\Delta_g)^r u= u= 0$$ in $W$. Then $u= 0$ in $M$.
	\end{prop}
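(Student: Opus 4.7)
The plan is to split the non-integer exponent as $r = m + s$ with $m$ a non-negative integer and $s \in (0,1)$, reduce the statement to the base case $s \in (0,1)$ by exploiting the locality of integer powers of $-\Delta_g$, and in the base case invoke a Caffarelli--Silvestre/Stinga--Torrea extension on $M \times (0,\infty)$ together with the strong unique continuation for a degenerate elliptic equation.

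First I would set $v := (-\Delta_g)^m u \in H^{2s}(M)$. Since $(-\Delta_g)^m$ is a differential operator of order $2m$, hence local, the assumption $u|_W \equiv 0$ combined with $u \in H^{2m}(M)$ gives $v|_W \equiv 0$ in the sense of weak derivatives. From the spectral definition (\ref{specfrac}) one has $(-\Delta_g)^s v = (-\Delta_g)^r u$, so $(-\Delta_g)^s v$ also vanishes on $W$. Thus it suffices to prove the statement in the base case $r = s \in (0,1)$.

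For that base case, I would use the Stinga--Torrea extension: the function
\[ V(x,y) := \sum_{k=0}^\infty \langle v, \varphi_k\rangle \, \Psi_s(y\sqrt{\lambda_k}) \, \varphi_k(x), \qquad (x,y) \in M \times (0,\infty), \]
with $\Psi_s$ the standard radial Bessel profile normalized so that $\Psi_s(0) = 1$, solves the degenerate elliptic equation
\[ \partial_y^2 V + \frac{1-2s}{y}\,\partial_y V + \Delta_g V = 0 \qquad \text{in } M \times (0,\infty), \]
recovers $V(\cdot,0) = v$, and satisfies $-c_s \lim_{y\to 0^+} y^{1-2s} \partial_y V(\cdot, y) = (-\Delta_g)^s v$. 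The hypotheses that $v$ and $(-\Delta_g)^s v$ vanish on $W$ thus furnish vanishing Dirichlet and weighted conormal traces of $V$ on $W \times \{0\}$. Applying the strong unique continuation for degenerate elliptic operators with an $A_2$ Muckenhoupt weight (R\"uland's theorem in the Euclidean setting, extended to Riemannian manifolds in subsequent works such as the one cited in \cite{feizmohammadi2021fractional}), I conclude $V \equiv 0$ on $M \times (0,\infty)$ and hence $v \equiv 0$ on $M$.

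Returning to $u$, we now have $(-\Delta_g)^m u \equiv 0$ on $M$. On a closed connected manifold the eigenvalues of $(-\Delta_g)^m$ are $\lambda_k^m$, which vanish only when $\lambda_k = 0$; the kernel of $(-\Delta_g)^m$ therefore consists of constants, and since $u$ vanishes on the nonempty open set $W$ this constant is zero, so $u \equiv 0$ on $M$. The main obstacle is the base-case UCP for $s \in (0,1)$: converting unique continuation for the nonlocal operator $(-\Delta_g)^s$ into unique continuation for a classical, if degenerate, second-order elliptic operator via the extension is the only step requiring nontrivial PDE input; the splitting trick and the kernel identification are elementary consequences of the spectral calculus.
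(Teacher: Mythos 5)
Your argument is correct, but it takes a different route from the paper, which in fact gives no self-contained proof of this proposition: there it is simply quoted as a special case of the entanglement principle (Theorem 1.8 in \cite{feizmohammadi2024calder}), with \cite{ghosh2020calderon} and \cite{ruland2015unique} cited for the Euclidean origin of the statement. Your decomposition $r=m+s$, the locality of $(-\Delta_g)^m$, and the spectral identity $(-\Delta_g)^s\bigl((-\Delta_g)^m u\bigr)=(-\Delta_g)^r u$ reduce all non-integer powers $r>1$ to the base case $s\in(0,1)$ in an elementary and correct way, and the concluding step ($\ker (-\Delta_g)^m$ consists of constants, killed by $u|_W=0$) is fine; note that connectedness of $M$, assumed in Theorem 1.1 but not restated in the proposition, is genuinely needed both there and for propagating the vanishing of the extension $V$ through $\{y>0\}$. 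For the base case you import the Stinga--Torrea extension plus boundary unique continuation for the degenerate $A_2$-weighted equation, i.e.\ the Ghosh--Salo--Uhlmann/R\"uland strategy; this does carry over to variable coefficients and hence to closed manifolds, but your supporting citation is off: \cite{feizmohammadi2021fractional} deliberately avoids the Caffarelli--Silvestre-type extension and instead uses the heat semigroup and a transmutation formula to reduce to the wave equation, so the manifold version of the base-case UCP should rather be attributed to \cite{feizmohammadi2024calder} (whose Theorem 1.8 contains it) or to variable-coefficient extension results localized in a chart. In comparison, your approach buys a hands-on reduction so that only the $s\in(0,1)$ statement must be taken from the literature, while the paper's single citation covers every $r\in(0,\infty)\setminus\mathbb{N}$ (indeed a stronger entanglement statement) at once.
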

	We remark that the unique continuation property of the fractional Laplacian in $\mathbb{R}^n$ was first proved in \cite{ghosh2020calderon} based on the Carleman estimates in \cite{ruland2015unique}.
	We also remark that Proposition \ref{UCP} can be viewed as a special case of the more general entanglement principle (see Theorem 1.8 in \cite{feizmohammadi2024calder}).

	\subsection{Caputo derivatives and Mittag–Leffler functions}
	
	For a sufficiently regular function $u(t)$, its Caputo derivative is given by
	\begin{equation}\label{capdef}
		\partial_t^\alpha u(t)= \frac{1}{\Gamma(1-\alpha)}\int^t_0\frac{u'(\tau)}{(t-\tau)^\alpha}\,\mathrm{d}\tau.
	\end{equation}
	If $u(0)= 0$, then we have the following Laplace transform formula
	$$\mathcal{L}(\partial_t^\alpha u)(s)= s^\alpha \mathcal{L}u (s).$$
	
	For $a, b> 0$, we have the (two-parameter) Mittag–Leffler function
	$$E_{a,b}(z):= \sum^\infty_{k=0}\frac{z^k}{\Gamma(ka+b)},$$
	which is an entire function generalizing the exponential function $e^z= E_{1,1}(z)$. Here $\Gamma$ is the standard Gamma function.
	For fixed $a$, we define 
	$$G_\xi(z):= E_{a, 1}(-\xi z^a)$$
	for each $\xi\in \mathbb{C}$, which is holomorphic on $\{z\in \mathbb{C}: \mathrm{Re}\,z> 0\}$.
	Based on the relation $\Gamma(z+1)= z\Gamma(z)$, it is easy to verify that 
	\begin{equation}\label{Ea1deri}
		\frac{\mathrm{d}}{\mathrm{d}z}G_\xi(z)= -\xi z^{a-1}E_{a, a}(-\xi z^a).
	\end{equation}
	
	The following proposition corresponds to Theorem 1.4 and Theorem 1.6 in \cite{podlubny1998fractional}.
	\begin{prop}\label{BddAsym}
		Suppose $0< a< 2$, $b> 0$ and $\pi a/2< \mu< \min\{\pi, \pi a\}$. Then we have
		$$|E_{a,b}(z)|\leq \frac{C}{1+ |z|},\qquad \mu\leq |\arg(z)|\leq \pi$$
		for some positive constant $C$. We also have
		$$E_{a,1}(z)= -\frac{z^{-1}}{\Gamma(1-\alpha)}+ \mathrm{O}(|z|^{-2}),\qquad \mu\leq |\arg(z)|\leq \pi$$
		as $|z|\to \infty$.
	\end{prop}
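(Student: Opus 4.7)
The plan is to prove both assertions via the classical Hankel-type contour representation of the Mittag--Leffler function. Using the Hankel formula $1/\Gamma(s) = (2\pi i)^{-1}\int_{\gamma} \zeta^{-s} e^\zeta \, d\zeta$, where $\gamma$ is a keyhole contour going from $\infty$ below the negative real axis, around the origin, and back to $\infty$ above it, one can substitute into the defining series and swap sum and integral (justified by uniform convergence on $\gamma$) to get, for a contour $\gamma_{\varepsilon,\theta}$ consisting of two rays $\{\arg \zeta = \pm\theta,\, |\zeta| \geq \varepsilon\}$ together with the arc $\{|\zeta|=\varepsilon,\, |\arg\zeta|\leq \theta\}$,
\begin{equation*}
E_{a,b}(z) \;=\; \frac{1}{2\pi i}\int_{\gamma_{\varepsilon,\theta}} \frac{\zeta^{a-b} e^{\zeta}}{\zeta^a - z}\, d\zeta, \qquad z\notin \{\zeta^a: \zeta\in\gamma_{\varepsilon,\theta}\}.
\end{equation*}
The key is to choose $\theta$ with $\pi a/2 < a\theta < \mu$, which is possible precisely because $\mu < \min\{\pi,\pi a\}$ forces $\mu/a < \min\{\pi/a, \pi\}$ and $\pi/2 < \mu/a$. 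Taking $\varepsilon$ of order $1$ keeps the arc harmless.

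The first obstacle is then to show that on this contour, $|\zeta^a - z| \geq c(|\zeta|^a + |z|)$ whenever $|\arg z|\geq \mu$. This follows because on the rays of $\gamma_{\varepsilon,\theta}$ one has $\arg(\zeta^a) = \pm a\theta$, which differs from $\arg z$ by at least $\mu - a\theta > 0$; a standard planar estimate then gives the claimed bound uniformly in $z$. Inserting this into the integral and using $|e^\zeta| = e^{|\zeta|\cos\theta}$ with $\cos\theta < 0$ along the rays (since $a\theta > \pi a/2$ implies $\theta > \pi/2$), the exponential decay of $e^{\zeta}$ makes the ray integrals absolutely convergent, and a direct estimate yields $|E_{a,b}(z)| \leq C(1+|z|)^{-1}$, which is the first statement.

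For the asymptotic, I expand the resolvent kernel as a geometric series in $|z|$ large:
\begin{equation*}
\frac{1}{\zeta^a - z} \;=\; -\frac{1}{z} - \frac{\zeta^a}{z^2} - \frac{\zeta^{2a}}{z^2(\zeta^a - z)},
\end{equation*}
and substitute into the contour integral with $b=1$. The zeroth-order term produces $-(1/z)\cdot(2\pi i)^{-1}\int_{\gamma_{\varepsilon,\theta}} \zeta^{a-1}e^\zeta\,d\zeta = -z^{-1}/\Gamma(1-a)$ by the Hankel formula, which matches the displayed leading term (reading $\alpha$ as $a$). The first-order term $-z^{-2}\cdot(2\pi i)^{-1}\int \zeta^{2a-1}e^\zeta\,d\zeta$ is $O(|z|^{-2})$, and the remainder integral, bounded using the same $|\zeta^a - z|\geq c(|\zeta|^a + |z|)$ inequality and the exponential decay, is also $O(|z|^{-2})$ uniformly in $|\arg z|\in[\mu,\pi]$.

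The main obstacle in this plan is purely technical: making the $\theta$-choice compatible with both the contour deformation (which needs the singularities $\zeta^a = z$ to lie off the contour) and the decay of $e^\zeta$ along the rays, while keeping uniformity in $\arg z$ across the full wedge $\mu \leq |\arg z|\leq \pi$. Once the inequality $|\zeta^a - z|\gtrsim |\zeta|^a + |z|$ is in hand, the rest reduces to straightforward integral estimates.
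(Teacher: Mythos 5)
The paper offers no proof of this proposition—it cites Theorems 1.4 and 1.6 of Podlubny's book, whose proofs are precisely the Hankel-contour argument you outline—so your proposal follows essentially the same (standard) route and its structure is sound, including the key choice $\pi a/2<a\theta<\mu$ and the angular-separation bound $|\zeta^a-z|\geq c(|\zeta|^a+|z|)$. Two small repairs: the last term of your resolvent expansion should carry a plus sign, $\frac{1}{\zeta^a-z}=-\frac{1}{z}-\frac{\zeta^a}{z^2}+\frac{\zeta^{2a}}{z^2(\zeta^a-z)}$, and the residue-free representation $E_{a,b}(z)=\frac{1}{2\pi i}\int_{\gamma_{\varepsilon,\theta}}\frac{\zeta^{a-b}e^{\zeta}}{\zeta^a-z}\,d\zeta$ holds only for $z$ in the component of the complement of $\{\zeta^a:\zeta\in\gamma_{\varepsilon,\theta}\}$ containing both the disc $|z|<\varepsilon^a$ and the sector $a\theta<|\arg z|\leq\pi$ (for $z$ in the wedge $|\arg z|<a\theta$ an extra term $\frac{1}{a}z^{(1-b)/a}e^{z^{1/a}}$ appears), a caveat that is harmless for you since you only use $|\arg z|\geq\mu>a\theta$, but it should be stated when justifying the analytic continuation from $|z|<\varepsilon^a$.
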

	In particular, since $0< \alpha< 1$, we can choose $a= \alpha$, $\pi\alpha/2 < \mu< \min\{\pi/2, \pi\alpha\}$ in the proposition above, which implies $E_{\alpha, \alpha}$ and $E_{\alpha, 1}$
	are bounded on the imaginary axis, and $E_{\alpha, 1}$ has the above asymptotic expansion on the imaginary axis.
	
	\section{Well-posedness}
	Our goal here is to establish the well-posedness of (\ref{fracSchro}).
	We are not going to pursue the optimal regularity since this is not our main concern.
	
	\begin{prop}\label{wellpose}
		For $f\in C^2_c((0, \infty); L^2(M))$, there exists a unique solution of (\ref{fracSchro})
		satisfying
		$$u\in C^1([0, \infty); H^{2\beta}(M))\cap L^\infty(0, \infty; H^{2\beta}(M)),\quad 
		\partial_t u\in L^\infty(0, \infty; H^{2\beta}(M)).$$
		
	\end{prop}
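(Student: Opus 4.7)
The plan is to construct the solution explicitly by a spectral decomposition, reducing the PDE to a family of scalar fractional ODEs indexed by the eigenvalues $\lambda_k$, and then to show that the resulting series converges in the required norms. Concretely, writing $u_k(t) = \langle u(t,\cdot), \varphi_k\rangle$ and $f_k(t) = \langle f(t,\cdot), \varphi_k\rangle$, the problem (\ref{fracSchro}) formally decouples into
\begin{equation*}
i\partial_t^\alpha u_k(t) + \lambda_k^\beta u_k(t) = f_k(t),\qquad u_k(0)=0,
\end{equation*}
for each $k\geq 0$. Taking the Laplace transform, using $\mathcal{L}(\partial_t^\alpha u_k)(s)=s^\alpha\mathcal{L}u_k(s)$, and inverting via the known Laplace pair for Mittag--Leffler functions, one obtains the representation
\begin{equation*}
u_k(t) = -i\int_0^t (t-\tau)^{\alpha-1} E_{\alpha,\alpha}\bigl(i\lambda_k^\beta(t-\tau)^\alpha\bigr)\,f_k(\tau)\,\mathrm{d}\tau.
\end{equation*}

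The next step is to rewrite this in a form suitable for spectral summation. Using (\ref{Ea1deri}) with $\xi = -i\lambda_k^\beta$, the singular kernel $(t-\tau)^{\alpha-1}E_{\alpha,\alpha}(i\lambda_k^\beta(t-\tau)^\alpha)$ is a multiple of $\frac{\mathrm{d}}{\mathrm{d}\tau}E_{\alpha,1}(i\lambda_k^\beta(t-\tau)^\alpha)$, so integrating by parts in $\tau$ and using $f_k(0)=0$ together with $E_{\alpha,1}(0)=1$ yields the clean identity
\begin{equation*}
\lambda_k^\beta u_k(t) = f_k(t) - \int_0^t E_{\alpha,1}\bigl(i\lambda_k^\beta(t-\tau)^\alpha\bigr)\,f_k'(\tau)\,\mathrm{d}\tau.
\end{equation*}
Differentiating (using $f_k'(0)=0$, which follows from $f\in C^2_c((0,\infty);L^2)$) and repeating the same integration-by-parts trick gives the parallel formula
\begin{equation*}
\lambda_k^\beta \partial_t u_k(t) = f_k'(t) - \int_0^t E_{\alpha,1}\bigl(i\lambda_k^\beta s^\alpha\bigr)\,f_k''(t-s)\,\mathrm{d}s.
\end{equation*}

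To convert these pointwise identities into the claimed norms, I would apply Proposition \ref{BddAsym}: since $0<\alpha<1$, one can pick $\mu$ with $\pi\alpha/2<\mu<\pi/2$, placing the positive imaginary axis inside the angular sector where $|E_{\alpha,1}(z)|\leq C$. Consequently, taking $f$ supported in some $[0,T]$ and applying Cauchy--Schwarz in $\tau$ to the convolutions above,
\begin{equation*}
\lambda_k^{2\beta}|u_k(t)|^2 \lesssim |f_k(t)|^2 + T\|f_k'\|_{L^2(0,T)}^2,\qquad \lambda_k^{2\beta}|\partial_t u_k(t)|^2 \lesssim |f_k'(t)|^2 + T\|f_k''\|_{L^2(0,T)}^2,
\end{equation*}
uniformly in $t$. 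Summing over $k$ and using Parseval, the right-hand sides are bounded by $\|f\|_{C([0,T];L^2)}^2$, $\|f'\|_{L^2(0,T;L^2)}^2$, $\|f''\|_{L^2(0,T;L^2)}^2$, which are all finite for $f\in C^2_c$. This yields both $u,\partial_t u\in L^\infty(0,\infty;H^{2\beta}(M))$. Continuity in $t$ with values in $H^{2\beta}(M)$ follows from dominated convergence applied termwise in the series, with the above tail bounds providing a summable majorant.

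For uniqueness, the difference $v$ of two solutions has $v_k$ solving the homogeneous scalar fractional ODE with zero source and zero initial data; the Laplace-transform argument (or the standard uniqueness theory for fractional ODEs) then forces $v_k\equiv 0$ for every $k$, hence $v\equiv 0$. I expect the main technical nuisance to be the singular kernel $(t-\tau)^{\alpha-1}$ near $\tau=t$, which forces one to arrange the integration by parts carefully so that no boundary terms blow up; the devices $f_k(0)=f_k'(0)=0$ (a consequence of compact support in $(0,\infty)$) and the identity (\ref{Ea1deri}) for the derivative of $E_{\alpha,1}$ are what make this clean.
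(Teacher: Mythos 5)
Your proposal matches the paper's proof in all essentials: spectral decomposition into scalar fractional ODEs, the Mittag--Leffler convolution kernel obtained via Laplace transform, integration by parts through (\ref{Ea1deri}) with $f_k(0)=f_k'(0)=0$ to remove the singular kernel, the boundedness of $E_{\alpha,1}$ on the imaginary axis plus Cauchy--Schwarz to get uniform $H^{2\beta}$ bounds on $u_k$ and $\partial_t u_k$, and termwise passage to the limit (the paper additionally cites a result to justify computing the Caputo derivative term by term, which your plan leaves implicit). This is essentially the same argument, with your uniqueness remark being a minor addition the paper leaves tacit.
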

	\begin{proof}
		We will construct the solution in the series form of $\sum^\infty_{k= 0}u_k(t)\varphi_k$. To determine the Fourier coefficients $u_k$, we need to solve the fractional ODE
		\begin{equation}\label{fracODE}
			\left\{
			\begin{aligned}
				(i\partial_t^\alpha + \lambda^\beta_k) u_k&= f_k,\\
				u_k(0)&= 0,\\
			\end{aligned}
			\right.
		\end{equation}
		where $f_k(t)= \langle f(t), \varphi_k\rangle\in C^2_c((0, \infty))$.
		Based on the convolution property of the Laplace transform, this fractional ODE has the solution
		\begin{equation}\label{Kfconvolute}
			u_k = K_k* f_k
		\end{equation}
		(see (7) in \cite{ashurov2022time}), where $K_k(t)$ satisfies the Laplace transform
		\begin{equation}\label{KkLaplace}
			\mathcal{L}K_k(s)= \frac{1}{is^\alpha+ \lambda^\beta_k}.
		\end{equation}
		Based on the formula (4.9.1) in \cite{gorenflo2020mittag}, we know that
		\begin{equation}\label{Kk}
			K_k(t)= -it^{\alpha-1}E_{\alpha, \alpha}(i\lambda^\beta_k t^\alpha).
		\end{equation}
		
		The $j^{th}$ derivatives of $u_k$ (with respect to time) satisfies
		$$u^{(j)}_k = K_k* f^{(j)}_k,\qquad j= 0, 1.$$
		Then we have
		$$\lambda^\beta_ku^{(j)}_k(t)=
		\int^t_0 -i\lambda^\beta_k(t-\tau)^{\alpha-1}
		E_{\alpha, \alpha}(i\lambda^\beta_k (t-\tau)^\alpha)f^{(j)}_k(\tau)\,d\tau.$$
		Using $E_{a, 1}(0)= 1$, $f^{(j)}_k(0)= 0$ and (\ref{Ea1deri}), we integrate by parts to obtain
		\begin{equation}\label{ibp}
			\lambda^\beta_ku^{(j)}_k(t)=
			f^{(j)}_k(t)- \int^t_0E_{\alpha, 1}(i\lambda^\beta_k (t-\tau)^\alpha)f^{(j+1)}_k(\tau)d\tau.
		\end{equation}
		Then we have
		$$\lambda^\beta_k|u^{(j)}_k(t)|
		\leq |f^{(j)}_k(t)|+ \int^t_0|E_{\alpha, 1}(i\lambda^\beta_k (t-\tau)^\alpha)f^{(j+1)}_k(\tau)|d\tau$$
		$$\leq \int^t_0(1+|E_{\alpha, 1}(i\lambda^\beta_k (t-\tau)^\alpha)|)|f^{(j+1)}_k(\tau)|d\tau.$$
		
		Suppose $\mathrm{supp}\,f\subset (0, T_0)\times M$ for some 
		$T_0> 0$.
		By the boundedness of $E_{\alpha, 1}$ on the imaginary axis and Cauchy-Schwarz inequality, we have
		$$\lambda^{2\beta}_k|u^{(j)}_k(t)|^2
		\leq \int^{\min\{t, T_0\}}_0 (1+ |E_{\alpha, 1}(i\lambda^\beta_k (t-\tau)^\alpha)|)^2 d\tau
		\int^{\min\{t, T_0\}}_0|f^{(j+1)}_k(\tau)|^2 d\tau$$
		$$\leq C_\alpha T_0 \int^{T_0}_0|f^{(j+1)}_k(\tau)|^2 d\tau.$$
		By the dominated convergence theorem,
		$$\sum^\infty_{k= N}\lambda_k^{2\beta} |u^{(j)}_k(t)|^2\leq 
		C_\alpha T_0\int^{T_0}_0\sum^\infty_{k= N}|f^{(j+1)}_k(\tau)|^2\,d\tau\to 0$$
		as $N\to \infty$. This convergence (uniform in $t\in [0, \infty)$) implies that $\sum^\infty_{k= 0}u^{(j)}_k(t)\varphi_k$ ($j= 0, 1$) converges uniformly in $H^{2\beta}(M)$. Hence we can differentiate 
		$$u(t):=\sum^\infty_{k= 0}u_k(t)\varphi_k$$
		with respect to $t$ term-by-term. Now by the definition of Caputo derivatives (\ref{capdef}) and Proposition 23 in \cite{helin2020inverse}, we have
		$$\partial^\alpha_t u(t)=\sum^\infty_{k= 0}\partial^\alpha_t u_k(t)\varphi_k.$$
		Hence (\ref{fracODE}) ensures that $\sum^\infty_{k= 0}u_k(t)\varphi_k$ is the solution of (\ref{fracSchro}).
	\end{proof}
	
	\section{Inverse problem}
	Our goal here is to prove Theorem 1.1.
	
	Let $\lambda_{j_k}$ be the distinct eigenvalues and $P_k$ be the projections on the corresponding eigenspaces spanned by the eigenfunctions $\varphi_{j_k, p}$ ($1\leq p\leq m_k$). Here $m_k$ are the corresponding multiplicities. Then we have
	$$P_k u= \sum^{m_k}_{p=1}\langle u, \varphi_{j_k, p} \rangle \varphi_{j_k, p},\qquad u\in L^2(M).$$
	We define 
	$$P_{W_1, W_2, k}:= (P_k\circ e)|_{W_2},$$
	where $e$ is the zero extension map from $L^2(W_1)$ to $L^2(M)$. Note that
	$$P_{W_1, W_2, k}\in B(L^2(W_1)\to L^2(W_2)),$$
	which denotes the space of bounded linear operators from $L^2(W_1)$ to $L^2(W_2)$. 
	
	Due to the unique continuation property of elliptic operators, $\{\varphi_{j_k, p}|_{W_l}\}^{m_k}_{p=1}$ are linearly independent functions for $l= 1, 2$. If $$\sum^{m_k}_{p=1} c_pP_{W_1, W_2, k}(\varphi_{j_k, p}|_{W_1})= 0,$$ then it implies that
	$$\langle \sum^{m_k}_{p=1} c_pe(\varphi_{j_k, p}|_{W_1}), \varphi_{j_k, l}\rangle= 0, \qquad 1\leq l\leq m_k$$
	based on the linear independence of $\{\varphi_{j_k, p}|_{W_2}\}^{m_k}_{p=1}$.
	Combining these $m_k$ identities, it further implies $$\sum^{m_k}_{p=1} c_p\varphi_{j_k, p}|_{W_1}= 0,$$
	and thus $c_p= 0$ for $1\leq p\leq m_k$ based on the linear independence of $\{\varphi_{j_k, p}|_{W_1}\}^{m_k}_{p=1}$. Hence $P_{W_1, W_2, k}$ has rank $m_k$, and thus it is not a zero map for each $k$.
	
	We remark that based on the formula (4.9.1) in \cite{gorenflo2020mittag}, the Laplace transform formula (\ref{KkLaplace}) is valid for $s$ with $\mathrm{Re}\,s> \lambda_k^\frac{\beta}{\alpha}$. By the boundedness of $E_{\alpha, \alpha}$ on the imaginary axis, the Laplace transform on the LHS of (\ref{KkLaplace}) is well-defined for $s$ with $\mathrm{Re}\,s> 0$, so the  analytic continuation implies that (\ref{KkLaplace}) is valid for $s$ with
	$\mathrm{Re}\,s> 0$. 
	
	To prove Theorem 1.1, we will first determine the time-fractional power $\beta$ and then determine the space-fractional power $\alpha$ and the metric $g$ up to an isometry.
	
	\subsection{Determination of the time-fractional power}
	Roughly speaking, we will choose specific sources and study the asymptotic behavior of the corresponding solutions for large time $t$. The unique continuation property of the fractional Laplace-Beltrami operator will ensure that the principal part in the asymptotic expansion is non-vanishing. Then the knowledge of the source-to-solution map will enable us to determine the time-fractional power $\alpha$.
	
	More precisely, we pick a nonzero $\xi\in L^2(W_1)$ s.t. $\langle e(\xi), 1\rangle= 0$. For each $t> 0$, we define
	
	\[ g_t(\tau):=\begin{cases} 
		\tau, & 0\leq \tau\leq t/3;\\
		t/3-\tau, & t/3\leq \tau\leq 2t/3;\\
		0, & 2t/3\leq \tau\leq t.
	\end{cases}
	\]
	Note that $g_t$ is absolutely continuous and weakly differentiable, and we can choose (scalar-valued) functions $^l f(\tau)\in C^2_c((0, t))$ s.t. $^l f\to g_t$ in $H^1$ as $l\to \infty$. Let $^l u$ be the solution corresponding to the source 
	$(^l f)e(\xi)$. We use $(^l u)_k$ to denote the Fourier coefficients of $^l u$.
	Then by (\ref{ibp}) we have
	$$\lambda^\beta_k (^l u)_k(t)= 
	-(\int^t_0 E_{\alpha, 1}(i\lambda^\beta_k (t-\tau)^\alpha) (^l f)'(\tau)d\tau) \langle e(\xi), \varphi_k\rangle.$$
	
	The knowledge of the source-to-solution map $L^{\alpha, \beta}_{W_1, W_2}$ gives the knowledge
	of $(^l u)(t)|_{W_2}$ for each $l$. Let $l\to \infty$. Then we have the knowledge of 
	$w(t)|_{W_2}$. Here 
	$$w= -\sum^\infty_{k=1}\frac{1}{\lambda^\beta_k}(\int^t_0 E_{\alpha, 1}(i\lambda^\beta_k (t-\tau)^\alpha) (g_t)'(\tau)d\tau) \langle e(\xi), \varphi_k\rangle\varphi_k$$
	$$= -\sum^\infty_{k=1}\frac{1}{\lambda^\beta_k}[\int^{t/3}_0 E_{\alpha, 1}(i\lambda^\beta_k (t-\tau)^\alpha) d\tau- \int^{2t/3}_{t/3} E_{\alpha, 1}(i\lambda^\beta_k (t-\tau)^\alpha) d\tau] \langle e(\xi), \varphi_k\rangle\varphi_k.$$
	Base on the asymptotic expansion of $E_{\alpha, 1}$ in Proposition \ref{BddAsym}, for large $t$ we can further write
	$$w= C_{\alpha}t^{1-\alpha}\omega+ \mathrm{O}(t^{1-2\alpha}),\quad \omega= \sum^\infty_{k=1}\frac{1}{\lambda^{2\beta}_k}\langle e(\xi), \varphi_k\rangle\varphi_k,$$
	where $C_{\alpha}$ is a nonzero constant only depending on $\alpha$. Note that $(-\Delta_g)^{2\beta}\omega= e(\xi)$. Since $W_1$ and $W_2$ are disjoint based on the assumption in the statement of Theorem 1.1, we claim that $\omega|_{W_2}$ is nonzero. Otherwise $$(-\Delta_g)^{2\beta}\omega= \omega =0$$ in $W_2$ implies $\omega\equiv 0$ by the unique continuation property (Proposition \ref{UCP}), which contradicts with our choice of $\xi$.
	
	We can do the same analysis for $\tilde{L}^{\tilde{\alpha}, \tilde{\beta}}_{W_1, W_2}$. Then $L^{\alpha, \beta}_{W_1, W_2}= \tilde{L}^{\tilde{\alpha}, \tilde{\beta}}_{W_1, W_2}$
	implies 
	$$C_{\alpha}t^{1-\alpha}\omega|_{W_2}+ \mathrm{O}(t^{1-2\alpha})= \tilde{C}_{\tilde{\alpha}}t^{1-\tilde{\alpha}}\tilde{\omega}|_{W_2}+ \mathrm{O}(t^{1-2\tilde{\alpha}})$$
	for large $t$. We conclude that $\alpha= \tilde{\alpha}$. Otherwise we may assume $\alpha< \tilde{\alpha}$. Then we multiply both sides of the identity above by $t^{\alpha-1}$ and let $t\to\infty$ to reach the contradiction.
	
	\subsection{Determination of the space-fractional power and the metric}
	
	Now we can fix $\alpha$ since it has been determined. Next, we will retrieve the spectral information via the Laplace transform with respect to time. Weyl's law will enable us to determine the 
	space-fractional power $\beta$, and the determination result for the wave equation will enable us to determine the metric $g$ up to an isometry.
	
	\begin{prop}\label{LWHW}
		The knowledge of $L^{\alpha, \beta}_{W_1, W_2}f$ for all $f\in C^2_c((0, \infty); L^2(W_1))$
		determines the operator-valued function
		\begin{equation}\label{HWs}
			H^{\beta}_{W_1, W_2}(s):=\sum^\infty_{k=1}\frac{1}{\lambda^\beta_{j_k} + i s^\alpha }P_{W_1, W_2, k},\qquad \mathrm{Re}\,s> 0.
		\end{equation}
	\end{prop}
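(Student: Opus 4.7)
The strategy is to take the Laplace transform in time of the data $u_f|_{W_2}$ and read off $H^{\beta}_{W_1,W_2}(s)$ from the resulting spectral expansion. By Proposition \ref{wellpose}, $u_f \in L^\infty([0,\infty); H^{2\beta}(M))$, so the Laplace transform
\[
\mathcal{L}u_f(s) = \int_0^\infty e^{-st} u_f(t)\,\mathrm{d}t
\]
is well defined for every $s$ with $\mathrm{Re}\,s > 0$; in particular $\mathcal{L}u_f(s)|_{W_2}$ is determined by the data $L^{\alpha,\beta}_{W_1,W_2}f$.

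By linearity of $L^{\alpha,\beta}_{W_1,W_2}$ it suffices to treat separable sources $f(t,\cdot) = \phi(t)\xi$ with $\phi \in C^2_c((0,\infty))$ not identically zero and $\xi \in L^2(W_1)$. For such sources, $f_k(t) = \phi(t)\langle e(\xi), \varphi_k\rangle$ and $u_k = K_k * f_k$ as in Proposition \ref{wellpose}. Using the convolution-to-product rule together with (\ref{KkLaplace}) (valid for $\mathrm{Re}\,s > 0$ as already noted in this section), one obtains termwise
\[
\mathcal{L}u_k(s) = \frac{\mathcal{L}\phi(s)\,\langle e(\xi), \varphi_k\rangle}{i s^\alpha + \lambda_k^\beta}.
\]
Summing over $k$, regrouping by distinct eigenvalues to assemble the projections $P_k$, and restricting to $W_2$ should yield the target identity
\[
\mathcal{L}u_f(s)\big|_{W_2} = \mathcal{L}\phi(s)\, H^{\beta}_{W_1,W_2}(s)\xi, \qquad \mathrm{Re}\,s > 0.
\]

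To recover $H^{\beta}_{W_1,W_2}(s_0)\xi$ at a prescribed $s_0$ with $\mathrm{Re}\,s_0 > 0$, I would choose $\phi$ so that $\mathcal{L}\phi(s_0) \ne 0$, which is always possible since $\mathcal{L}\phi$ is entire and not identically zero for any nonzero $\phi \in C^2_c$, and then divide. Equivalently, fixing a single $\phi$ and using the holomorphy of both sides in $s$ on $\{\mathrm{Re}\,s > 0\}$ shows that the discrete zero set of $\mathcal{L}\phi$ does not obstruct recovery. Since $\xi \in L^2(W_1)$ is arbitrary, this determines the operator $H^{\beta}_{W_1,W_2}(s)$ as an element of $B(L^2(W_1)\to L^2(W_2))$ for every such $s$.

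The main technical point will be justifying the interchange of the Laplace integral with the infinite sum defining $u_f$, and confirming the convergence of the resulting series for $H^{\beta}_{W_1,W_2}(s)\xi$ in $L^2(W_2)$. The estimates from the proof of Proposition \ref{wellpose} yield uniform-in-$t$ control of the tail $\sum_{k\ge N}\lambda_k^{2\beta}|u_k(t)|^2$ in terms of Fourier tails of $f''$, which combined with the exponential decay of $e^{-st}$ for $\mathrm{Re}\,s > 0$ gives the domination needed for Fubini and dominated convergence; the resolvent-type bound $|\lambda_{j_k}^\beta + i s^\alpha|^{-1} = \mathrm{O}(\lambda_{j_k}^{-\beta})$ paired with Bessel's inequality for $\{\langle e(\xi), \varphi_k\rangle\}$ ensures that the series defining $H^{\beta}_{W_1,W_2}(s)\xi$ converges in $L^2(W_2)$.
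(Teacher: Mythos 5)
Your route is essentially the paper's: take the Laplace transform of the data for separable sources, use the convolution rule with (\ref{KkLaplace}), and divide out the temporal factor (the paper fixes one $a$ with $\mathcal{L}a>0$ on $(0,\infty)$ and only needs real $s>0$; your ``choose $\phi$ with $\mathcal{L}\phi(s_0)\neq 0$, or use holomorphy'' is an equivalent variant). However, there is one genuine gap: you take $\xi\in L^2(W_1)$ arbitrary and ignore the zero eigenvalue $\lambda_0=0$. For a general $\xi$ the coefficient $f_0(t)=\phi(t)\langle e(\xi),\varphi_0\rangle$ does not vanish, and summing over \emph{all} $k$ produces the extra term
\begin{equation*}
\frac{\mathcal{L}\phi(s)\,\langle e(\xi),\varphi_0\rangle}{i s^{\alpha}}\,\varphi_0\big|_{W_2},
\end{equation*}
which is not part of $H^{\beta}_{W_1,W_2}(s)\xi$, since the sum in (\ref{HWs}) starts at $k=1$. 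So your ``target identity'' $\mathcal{L}u_f(s)|_{W_2}=\mathcal{L}\phi(s)H^{\beta}_{W_1,W_2}(s)\xi$ is false for $\xi$ with nonzero mean, and the concluding step ``since $\xi$ is arbitrary, this determines the operator'' does not follow as written. Note that the extra term cannot simply be dropped or subtracted off: $\varphi_0$ is the nonzero constant $\mathrm{Vol}_g(M)^{-1/2}$, so $\varphi_0|_{W_2}\neq 0$ and its value involves the unknown total volume of $(M,g)$.

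The paper's proof avoids this precisely by restricting to sources with $\langle e(\xi),1\rangle=0$ (a condition that is checkable because $g$ is known on $W_1$), which kills $f_0$ and makes the sum start at $k=1$; with that restriction your computation goes through verbatim. Alternatively, you could keep general $\xi$ but then argue that the zero-mode contribution can be separated from $H^{\beta}_{W_1,W_2}(s)\xi$ (for instance by its $s^{-\alpha}$ blow-up as $s\to 0^{+}$, whereas the $k\geq 1$ series stays bounded); but some such argument must be supplied. Your remaining technical points (termwise Laplace transform, resolvent bound $|\lambda_{j_k}^{\beta}+is^{\alpha}|^{-1}=\mathrm{O}(\lambda_{j_k}^{-\beta})$, convergence of the series in $B(L^2(W_1)\to L^2(W_2))$) are consistent with what the paper does via the estimates of Proposition \ref{wellpose} and the cited componentwise Laplace transform result.
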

	\begin{proof}
		We fix a nonzero non-negative $a(t)\in C^2_c((0, \infty))$ such that $\mathcal{L}a> 0$ on $(0, \infty)$. We consider a source $f$ of the form
		$$f= a(t)e(\xi),\qquad \xi\in L^2(W_1)\quad s.t.\quad \langle e(\xi), 1\rangle= 0.$$
		(Then the Fourier coefficient $f_0 =0$, and the summation will start at $k=1$.)
		
		Recall that we use $u_f$ to denote the solution of (\ref{fracSchro}) associated with $f$. Then for each $s$ with $\mathrm{Re}\,s> 0$,
		the function $e^{-st}u_f(t)$ is $L^2$-valued integrable on $(0, \infty)$.
		By Proposition 23 in \cite{helin2020inverse}, we can take the Laplace transform componentwise. For $f= a(t)e(\xi)$, we use the convolution property of the Laplace transform to obtain
		$$\mathcal{L}u_f(s)= \sum^\infty_{k=1} \mathcal{L}(K_k* f_k)(s)\varphi_k= \sum^\infty_{k=1}\frac{1}{\lambda^\beta_k+ i s^\alpha}\mathcal{L}f_k(s)\varphi_k$$
		$$= \sum^\infty_{k=1}\frac{1}{\lambda^\beta_k+ i s^\alpha}\langle\mathcal{L}f(s), \varphi_k\rangle\varphi_k=  \mathcal{L}a(s)\sum^\infty_{k=1}\frac{1}{\lambda^\beta_{j_k}+ i s^\alpha}P_k\circ e(\xi)$$
		based on the formula for the solution of (\ref{fracSchro}) in the proof of Proposition \ref{wellpose}. Hence, we can conclude that $L^{\alpha, \beta}_{W_1, W_2}f$ determines the restriction of $\frac{1}{\mathcal{L}a(s)}\mathcal{L}u_f(s)$ in $W_2$, which equals to $H^{\beta}_{W_1, W_2}(s)\xi$.
	\end{proof}
	
	In particular, the knowledge of $H^{\beta}_{W_1, W_2}(s)$ in (\ref{HWs}) for $s>0$ determines the meromorphic operator-valued function
	\begin{equation}\label{HWz}
		H^{\beta}_{W_1, W_2}(z):= \sum^\infty_{k=1}\frac{1}{\lambda^\beta_{j_k} + i z }P_{W_1, W_2, k}
	\end{equation}
	with poles $\{i\lambda^\beta_{j_k}\}$. The convergence of the series in 
	$B(L^2(W_1)\to L^2(W_2))$-norm at non-pole $z$ is ensured by the fact that $\lambda_{j_k}\to \infty$ as $k\to \infty$ and the fact that the operators $P_k$ project onto mutually orthogonal subspaces. Note that 
	$$\lim_{z\to z_0}(\lambda^\beta_{j_k} + i z)H^{\beta}_{W_1, W_2}(z)= P_{W_1, W_2, k}$$ if $z_0= i\lambda^\beta_{j_k}$, and the limit is zero if $z_0$ is not a pole.
	
	\begin{prop}\label{PfTh1}
		Let $H^{\beta}_{W_1, W_2}(z), \tilde{H}^{\tilde\beta}_{W_1, W_2}(z)$ be the meromorphic functions associated with $g, \tilde{g}$ respectively. Suppose {$g= \tilde{g}$ in $W_1\cup W_2$} and $H^{\beta}_{W_1, W_2}(z)= \tilde{H}^{\tilde\beta}_{W_1, W_2}(z)$. Then we have $\beta= \tilde{\beta}$ and
		$$\{(\lambda_{j_k}, P_{W_1, W_2, k}): k\in \mathbb{N}\}= \{(\tilde{\lambda}_{\tilde{j}_k}, \tilde{P}_{W_1, W_2, k}): k\in \mathbb{N}\}.
		$$
	\end{prop}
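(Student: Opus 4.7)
The plan is to read off the spectral data of $-\Delta_g$ from the pole--residue structure of $H^\beta_{W_1, W_2}(z)$, and then use Weyl's law to pin down the exponent $\beta$.

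The discussion immediately preceding the proposition already establishes that $H^\beta_{W_1, W_2}(z)$ is meromorphic on $\mathbb{C}$ with simple poles exactly at $\{i\lambda^\beta_{j_k}\}_{k\geq 1}$, and that $\lim_{z\to i\lambda^\beta_{j_k}}(\lambda^\beta_{j_k}+iz)H^\beta_{W_1, W_2}(z)=P_{W_1, W_2, k}$. Crucially, none of these poles gets cancelled: by the linear-independence argument recorded earlier in Section~4 (which uses UCP for the classical Laplace--Beltrami operator together with $W_1\cap W_2=\emptyset$), the operator $P_{W_1, W_2, k}$ is nonzero and in fact has rank equal to the multiplicity $m_k$ of $\lambda_{j_k}$. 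The same structure holds for the tilde side. Hence the hypothesis $H^\beta_{W_1, W_2}(z)=\tilde H^{\tilde\beta}_{W_1, W_2}(z)$ immediately yields the set identity
$$\{(\lambda^\beta_{j_k},P_{W_1, W_2, k}):k\geq 1\}=\{(\tilde\lambda^{\tilde\beta}_{\tilde j_k},\tilde P_{W_1, W_2, k}):k\geq 1\}.$$

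Next I would invoke Weyl's law to identify $\beta$. Since the rank of each residue operator recovers the multiplicity $m_k$, the set identity above produces, for every $\Lambda>0$, the counting-function identity
$$N(\Lambda):=\sum_{\lambda^\beta_{j_k}\leq\Lambda}m_k=\#\{\ell:\lambda_\ell\leq\Lambda^{1/\beta}\}=\#\{\ell:\tilde\lambda_\ell\leq\Lambda^{1/\tilde\beta}\}=:\tilde N(\Lambda),$$
counted with multiplicity. By Weyl's law on the closed $n$-manifold $(M,g)$, $N(\Lambda)\sim c_n\,\mathrm{Vol}_g(M)\,\Lambda^{n/(2\beta)}$ as $\Lambda\to\infty$, while the tilde side gives $\sim c_n\,\mathrm{Vol}_{\tilde g}(M)\,\Lambda^{n/(2\tilde\beta)}$. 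Matching the growth exponents (and using $n\geq 2$) forces $\beta=\tilde\beta$. Once $\beta=\tilde\beta$, the strict monotonicity of $x\mapsto x^\beta$ on $(0,\infty)$ converts the set identity displayed above into the desired $\{(\lambda_{j_k},P_{W_1, W_2, k})\}=\{(\tilde\lambda_{\tilde j_k},\tilde P_{W_1, W_2, k})\}$.

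I expect the principal (though mild) obstacle to be the meromorphic bookkeeping: making sure the series defining $H^\beta_{W_1, W_2}(z)$ converges in the $B(L^2(W_1)\to L^2(W_2))$-norm locally uniformly off its poles, so that poles and residues can be extracted term-by-term, and confirming non-vanishing of each $P_{W_1, W_2, k}$ so that no pole is spuriously missed. Both issues are already handled in the passage leading up to the proposition (using $\lambda_{j_k}\to\infty$ and the mutual orthogonality of the spectral projections, together with the UCP-based non-vanishing of $P_{W_1, W_2, k}$), so the work remaining is essentially assembling those ingredients.
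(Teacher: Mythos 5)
Your proposal is correct and follows essentially the same route as the paper: extract the poles $i\lambda^\beta_{j_k}$ and residues $P_{W_1,W_2,k}$ (nonzero of rank $m_k$ by the argument preceding the proposition) to get the set identity for $(\lambda^\beta_{j_k},P_{W_1,W_2,k})$, then compare eigenvalue counting functions via Weyl's law to force $\beta=\tilde\beta$ and conclude. The only (inconsequential) slip is attributing the non-vanishing of $P_{W_1,W_2,k}$ to disjointness of $W_1,W_2$, whereas it rests on the linear independence of restricted eigenfunctions from elliptic unique continuation.
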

	\begin{proof}
		Based on the assumption, for each $k \in \mathbb{N}$, we have
		\begin{equation}\label{HWtildeeq}
			P_{W_1, W_2, k}=
			\lim_{z\to i\lambda^\beta_{j_k}}(\lambda^\beta_{j_k} + i z)H^{\beta}_{W_1, W_2}(z) =\lim_{z\to i\lambda^\beta_{j_k}}(\lambda^\beta_{j_k} + i z)\tilde{H}^{\tilde\beta}_{W_1, W_2}(z).
		\end{equation}
		Note that the RHS of (\ref{HWtildeeq})
		equals to $\tilde{P}_{W_1, W_2, l}$, provided $\lambda^{\beta}_{j_k}= \tilde{\lambda}^{\tilde\beta}_{\tilde{j}_l}$ for some $l\in \mathbb{N}$.
		Also note that if $\lambda^{\beta}_{j_k}\notin\{\tilde{\lambda}^{\tilde\beta}_{\tilde{j}_l}: l\in \mathbb{N}\}$, then $i\lambda^\beta_{j_k}$ cannot be a pole of the $\tilde{H}^{\tilde\beta}_{W_1, W_2}(z)$ based on the previous observation, which implies the RHS of (\ref{HWtildeeq}) is zero contradicting with that $P_{W_1, W_2, k}$ is nonzero.
		
		Hence (\ref{HWtildeeq}) implies
		$$(\lambda^{\beta}_{j_k}, P_{W_1, W_2, k})\in \{(\tilde{\lambda}^{\tilde\beta}_{\tilde{j}_k}, \tilde{P}_{W_1, W_2, k}): k\in \mathbb{N}\}$$
		for each $k$, and thus
		$$\{(\lambda^{\beta}_{j_k}, P_{W_1, W_2, k}): k\in \mathbb{N}\}\subset \{(\tilde{\lambda}^{\tilde\beta}_{\tilde{j}_k}, \tilde{P}_{W_1, W_2, k}): k\in \mathbb{N}\}.
		$$			
		We can symmetrically show the reverse inclusion. Hence, we have
		$$\{(\lambda^{\beta}_{j_k}, P_{W_1, W_2, k}): k\in \mathbb{N}\}= \{(\tilde{\lambda}^{\tilde\beta}_{\tilde{j}_k}, \tilde{P}_{W_1, W_2, k}): k\in \mathbb{N}\}.
		$$			
		
		Now $\lambda^{\beta}_{j_k}= \tilde{\lambda}^{\tilde\beta}_{\tilde{j}_k}$ and
		$P_{W_1, W_2, k}= \tilde{P}_{W_1, W_2, k}$ imply that $m_k= \tilde{m}_k$ and $\lambda^{\beta}_k= \tilde{\lambda}^{\tilde\beta}_k$.
		Let $N(\lambda)$, $\tilde{N}(\lambda)$ denote the number of eigenvalues of the Laplace-Beltrami operator corresponding to $g$, $\tilde{g}$
		(counting their multiplicities) less than or equal to $\lambda$.
		Then $N(\lambda)= \tilde{N}(\lambda^\frac{\beta}{\tilde{\beta}})$. By Weyl's law,
		$$\lim_{\lambda\to \infty}\frac{N(\lambda)}{\lambda^{\frac{\mathrm{dim} M}{2}}}= C,\qquad \lim_{\lambda\to \infty}\frac{\tilde{N}(\lambda)}{\lambda^{\frac{\mathrm{dim} M}{2}}}= \tilde{C}.$$
		Hence the only possibility is $\beta= \tilde{\beta}$. Then we have
		$$\{(\lambda_{j_k}, P_{W_1, W_2, k}): k\in \mathbb{N}\}= \{(\tilde{\lambda}_{\tilde{j}_k}, \tilde{P}_{W_1, W_2, k}): k\in \mathbb{N}\}.
		$$
	\end{proof}

	The proof of Theorem 1.1 will be complete based on the following known result since the source-to-solution map associated with the wave equation below is completely characterized by the spectral data set in Proposition \ref{PfTh1}.
	
	\begin{prop}[{\cite[Theorem 1.1]{lassas2023disjoint}}]\label{classicstosol}
		Let $W_1, W_2\subset M$ be nonempty open subsets. Suppose $W_1$ satisfies the spectral bound condition (\ref{specbdcond}) and $W_2$ has smooth boundary. The metric $g$ can be determined up to an isometry from the knowledge of $g|_{W_1\cup W_2}$ and the source-to-solution map
		$$L^{\mathrm{wave}}_{W_1, W_2}: f\to u_f|_{W_2\times (0, \infty)},\qquad f\in C^\infty_c((0, \infty)\times W_1)$$
		associated with the wave equation
		\begin{equation}
			\left\{
			\begin{aligned}
				(\partial^2_{t}  -\Delta_g) u&= f,\quad \,\,\, (x, t)\in M\times (0, \infty),\\
				u(0)= \partial_t u(0)&= 0,\quad \,\,\,x\in M.\\
			\end{aligned}
			\right.
		\end{equation}
		Here $L^{\mathrm{wave}}_{W_1, W_2}$ has the representation formula
		$$L^{\mathrm{wave}}_{W_1, W_2}f(x, t)= \sum^\infty_{k=0}\int^t_0 K^{\mathrm{wave}}_{j_k}(t-\tau)P_{W_1, W_2, k}f(x, \tau)\,d\tau,$$
		where $j_0= 0$ with
		$$K^{\mathrm{wave}}_0(t)= t, \qquad
		K^{\mathrm{wave}}_k(t)= \frac{\sin(\sqrt{\lambda_k}t)}{\sqrt{\lambda_k}}\quad (k\geq 1).$$
	\end{prop}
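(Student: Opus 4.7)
Since Proposition \ref{classicstosol} is quoted from Lassas--Saksala, I will sketch the general strategy rather than reproduce the full argument. The plan has two stages: first, extract the spectral data from $L^{\mathrm{wave}}_{W_1,W_2}$; second, reconstruct the metric from that spectral data by a disjoint-data variant of the boundary control method.

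For the first stage I would mimic Propositions \ref{LWHW}--\ref{PfTh1}. Plug in separated sources $f(x,t)=a(t)\xi(x)$ with $\mathrm{supp}\,\xi\subset W_1$, take the Laplace transform in $t$, and observe that the convolution representation in the statement turns into an operator-valued meromorphic function of $s$ with simple poles at $\pm i\sqrt{\lambda_{j_k}}$ whose residues are multiples of $P_{W_1,W_2,k}$. This recovers the distinct eigenvalues together with their projection kernels. Because $g$ is known on $W_1\cup W_2$ and the spectral bound condition forces every eigenfunction to be nontrivial on $W_1$ (and, by elliptic unique continuation, also on $W_2$), a singular value decomposition of each finite-rank operator $P_{W_1,W_2,k}$ then determines within each eigenspace an orthonormal basis whose restrictions to $W_1\cup W_2$ are fixed up to an orthogonal rotation inside the eigenspace.

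For the second stage I would appeal to Tataru's unique continuation theorem for the wave operator: waves $u^f(\cdot,T)$ generated by sources supported in $(0,T)\times V$ for an open set $V\subset W_1$ are $L^2$-dense in the domain of influence of $V$ at time $T$. Combining this density with a Blagoveshchenskii-type identity, which expresses $L^2$-inner products of two such waves in terms of $L^{\mathrm{wave}}_{W_1,W_2}$ paired against the already-known eigenfunctions on $W_2$, one can compute $\mathrm{vol}_g$-measures of intersections of wave cones based in $W_1$. Varying the sets and times then recovers the collection of boundary distance functions $\{d_g(\cdot,x)|_{W_1}: x\in M\}$, from which $(M,g)$ is reconstructed up to isometry via the machinery of \cite{kachalov2001inverse}. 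The main obstacle throughout is the disjointness $W_1\cap W_2=\emptyset$, which rules out the classical overlapping-support BC arguments; the spectral bound condition on $W_1$ is precisely what compensates, guaranteeing that no eigenmode is lost in the control step and that the Blagoveshchenskii identity can still be evaluated through the eigenfunction pairing on $W_2$. A secondary subtlety is the rotation ambiguity within each multiplicity subspace, which must be carried consistently through the gluing of local Blagoveshchenskii computations.
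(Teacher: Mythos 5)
You have correctly recognized that Proposition~\ref{classicstosol} is not proved in this paper at all: it is imported verbatim from \cite[Theorem 1.1]{lassas2023disjoint}, and the only ingredient the present argument actually uses is the statement together with the representation formula, which shows that $L^{\mathrm{wave}}_{W_1, W_2}$ is completely determined by the spectral data set $\{(\lambda_{j_k}, P_{W_1, W_2, k})\}$ recovered in Proposition~\ref{PfTh1}. So there is no in-paper proof to compare your sketch against; what can be said is whether your outline is a fair account of the cited result. It is, at the level of ingredients: recovery of the eigenvalues and the finite-rank operators $P_{W_1,W_2,k}$ from the poles and residues of the time-Laplace-transformed data (your first stage mirrors Propositions~\ref{LWHW}--\ref{PfTh1} of this paper, with poles at $\pm i\sqrt{\lambda_{j_k}}$), followed by a disjoint-data adaptation of unique continuation/boundary-control machinery in which the spectral bound condition (\ref{specbdcond}) on $W_1$ substitutes for the overlap that the classical method requires. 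One organizational difference worth noting: in \cite{lassas2023disjoint} the second stage is not a from-scratch Blagoveshchenskii/distance-function reconstruction but rather a reduction to local spectral data --- the spectral bound and the knowledge of $g$ on $W_1\cup W_2$ are used to pass from the operators $P_{W_1,W_2,k}$ to the eigenvalues together with eigenfunction restrictions on an open set (modulo the orthogonal ambiguity within each eigenspace that you flag), after which the known result that local spectral data determines $(M,g)$ up to isometry is invoked; the boundary-control and unique-continuation arguments you describe live inside that cited result rather than being redone. Your sketch is therefore plausible but should be presented explicitly as a summary of the external reference, not as a proof supplied by (or needed for) this paper.
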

	
	\section{Probabilistic formulation}
	Inverse problems for stochastic PDEs involving additive or multiplicative white noise have caught much attention recently (see for instance, \cite{li2017inverse, feng2023inverse}). Here we will consider a stochastic version of the deterministic problem studied in previous sections.
	
	Throughout this section, we fix a filtered probability space $(\Omega, \{\mathcal{F}_t\}, \mathbb{P})$. We use $\mathbb{E}$ to denote the expectation of a random variable, and we use $\dot {B}$ to denote the formal derivative of the standard real-valued Brownian motion $B_t$ defined on $(\Omega, \{\mathcal{F}_t\}, \mathbb{P})$.
	For $\frac{1}{2}< \alpha< 1$, we consider the fractional stochastic problem  
	\begin{equation}\label{stofracSchro}
		\left\{
		\begin{aligned}
			i\partial^\alpha_{t}u +(-\Delta_g)^\beta u &= f+ \sigma \dot {B},\quad \,\,\, \mathrm{in}\,\, M\times (0, \infty),\\
			u(0)&= 0,\quad \,\,\,\mathrm{in}\,\, M,\\
		\end{aligned}
		\right.
	\end{equation}
	where the mean of the source $f$ and the deviation $\sigma$ are deterministic functions.
	
	\begin{define}
		For $f, \sigma\in C^2_c((0, \infty); L^2(M))$, we say $u$ is the solution of (\ref{stofracSchro})
		provided that
		\begin{equation}\label{supE}
			\sup_t\mathbb{E}[\,||u(t)||^2_{L^2(M)}\,]< \infty,
		\end{equation}
		and $u$ has the form
		$\sum^\infty_{k= 0}u_k(t)\varphi_k$, where the Fourier coefficients $u_k$ solve the fractional stochastic ODE
		\begin{equation}\label{stofracODE}
			\left\{
			\begin{aligned}
				(i\partial_t^\alpha + \lambda^\beta_k) u_k&= f_k+ \sigma_k \dot {B},\\
				u_k(0)&= 0.
			\end{aligned}
			\right.
		\end{equation}
		Here $f_k(t)= \langle f(t), \varphi_k\rangle\in C^2_c((0, \infty))$, $\sigma_k(t)= \langle \sigma(t), \varphi_k\rangle\in C^2_c((0, \infty))$.
	\end{define}
	
	We remark that (\ref{stofracODE}) should be interpreted as the integral equation
	\begin{equation}\label{stofracODEint}
		u_k(t)= \frac{1}{\Gamma(\alpha)}\int^t_0 (t-\tau)^{\alpha-1}(i\lambda^\beta_k u_k(\tau)- if_k(\tau))\,\mathrm{d}\tau+ \frac{1}{\Gamma(\alpha)}\int^t_0 (t-\tau)^{\alpha-1}(- i\sigma_k(\tau))\,\mathrm{d}B_\tau.
	\end{equation}
	We will use the argument similar to the one in \cite{anh2019variation} to derive the formula for $u_k$ based on the knowledge of the corresponding deterministic problem and some basic facts on It\^o integrals.
	
	\begin{prop}
		(\ref{stofracSchro}) has the solution
		$u= \sum^\infty_{k= 0}u_k(t)\varphi_k$, where 
		\begin{equation}\label{stofracODEuk}
			u_k(t)= \int^t_0 -i(t-\tau)^{\alpha-1}
			E_{\alpha, \alpha}(i\lambda^\beta_k (t-\tau)^\alpha)f_k(\tau)\,\mathrm{d}\tau
		\end{equation}
		$$+\int^t_0 -i(t-\tau)^{\alpha-1}
		E_{\alpha, \alpha}(i\lambda^\beta_k (t-\tau)^\alpha)\sigma_k(\tau)\,\mathrm{d}B_\tau.$$
	\end{prop}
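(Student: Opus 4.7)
The plan is to verify that the formula (\ref{stofracODEuk}) satisfies the integral equation (\ref{stofracODEint}) componentwise, and then to check the $L^2$-bound (\ref{supE}). Let $\Phi_k(t) := -it^{\alpha-1}E_{\alpha,\alpha}(i\lambda_k^\beta t^\alpha)$ and decompose
\[u_k(t) = u_k^{\mathrm{det}}(t) + u_k^{\mathrm{sto}}(t),\quad u_k^{\mathrm{det}}(t) := \int_0^t \Phi_k(t-\tau)f_k(\tau)\,\mathrm{d}\tau, \quad u_k^{\mathrm{sto}}(t) := \int_0^t \Phi_k(t-\tau)\sigma_k(\tau)\,\mathrm{d}B_\tau.\]
Proposition \ref{wellpose} already establishes that $u_k^{\mathrm{det}}$ solves the deterministic ODE $(i\partial_t^\alpha + \lambda_k^\beta)u_k^{\mathrm{det}} = f_k$ with $u_k^{\mathrm{det}}(0) = 0$; applying the Riemann--Liouville integral $I^\alpha$ to both sides of that ODE yields exactly (\ref{stofracODEint}) with $\sigma_k \equiv 0$. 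By linearity, it suffices to verify (\ref{stofracODEint}) for $u_k^{\mathrm{sto}}$ under the assumption $f_k \equiv 0$.

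For the stochastic part, I would substitute the definition of $u_k^{\mathrm{sto}}$ into $\frac{i\lambda_k^\beta}{\Gamma(\alpha)}\int_0^t(t-\tau)^{\alpha-1}u_k^{\mathrm{sto}}(\tau)\,\mathrm{d}\tau$ and apply the stochastic Fubini theorem (as in \cite{anh2019variation}) to swap the outer Lebesgue integral with the inner It\^o integral, obtaining
\[\int_0^t \sigma_k(s)\left[\frac{i\lambda_k^\beta}{\Gamma(\alpha)}\int_s^t (t-\tau)^{\alpha-1}\Phi_k(\tau-s)\,\mathrm{d}\tau\right] \mathrm{d}B_s.\]
After the change of variables $r := t-s$, $v := \tau - s$, matching the resulting $\mathrm{d}B_s$-integrand against $\Phi_k(t-s)\sigma_k(s) + \frac{i(t-s)^{\alpha-1}}{\Gamma(\alpha)}\sigma_k(s)$ reduces the problem to the pointwise Volterra identity
\[\Phi_k(r) = \frac{i\lambda_k^\beta}{\Gamma(\alpha)}\int_0^r (r-v)^{\alpha-1}\Phi_k(v)\,\mathrm{d}v - \frac{ir^{\alpha-1}}{\Gamma(\alpha)},\]
which I would establish by expanding $E_{\alpha,\alpha}$ as a power series and evaluating each Beta integral $\int_0^r (r-v)^{\alpha-1}v^{(k+1)\alpha-1}\,\mathrm{d}v$ explicitly, then reindexing to recover $\Phi_k(r)$ modulo the displayed boundary term.

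To verify (\ref{supE}), the It\^o isometry gives $\mathbb{E}|u_k^{\mathrm{sto}}(t)|^2 = \int_0^t (t-\tau)^{2(\alpha-1)}|E_{\alpha,\alpha}(i\lambda_k^\beta(t-\tau)^\alpha)|^2|\sigma_k(\tau)|^2\,\mathrm{d}\tau$. Proposition \ref{BddAsym} bounds $|E_{\alpha,\alpha}(iy)|$ uniformly in $y \in \mathbb{R}$, and the hypothesis $\alpha > 1/2$ makes $(t-\tau)^{2(\alpha-1)}$ integrable near $\tau = t$, so this expectation is bounded uniformly in $t$ by a multiple of $\|\sigma_k\|_{L^\infty}^2$. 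Summing over $k$ via Parseval and combining with the analogous pathwise bound for $u_k^{\mathrm{det}}$ (already implicit in the proof of Proposition \ref{wellpose}) yields (\ref{supE}).

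The main obstacle is the rigorous application of stochastic Fubini: the kernel $(t-\tau)^{\alpha-1}\Phi_k(\tau-s)$ carries integrable but unbounded singularities at both $\tau = s$ and $\tau = t$, so the prerequisite integrability
\[\int_0^t |\sigma_k(s)|^2 \int_s^t (t-\tau)^{2(\alpha-1)}|\Phi_k(\tau-s)|^2\,\mathrm{d}\tau\,\mathrm{d}s < \infty\]
must be verified via a Beta-function estimate on the interior double integral, and this is precisely the step where the standing assumption $\alpha > 1/2$ (stronger than the $\alpha \in (0, 1)$ used throughout the deterministic sections) becomes essential.
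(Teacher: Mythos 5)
Your proposal is correct, but it takes a genuinely different route from the paper. You verify directly that the candidate formula satisfies the defining integral equation (\ref{stofracODEint}): after splitting off the deterministic part (handled by Proposition \ref{wellpose}), you apply the stochastic Fubini theorem to the singular convolution and reduce everything to the pointwise Volterra identity
\[
\Phi_k(r)=\frac{i\lambda_k^\beta}{\Gamma(\alpha)}\int_0^r (r-v)^{\alpha-1}\Phi_k(v)\,\mathrm{d}v-\frac{ir^{\alpha-1}}{\Gamma(\alpha)},
\]
which indeed follows from the termwise Beta integrals $\int_0^r(r-v)^{\alpha-1}v^{(m+1)\alpha-1}\,\mathrm{d}v=B(\alpha,(m+1)\alpha)\,r^{(m+2)\alpha-1}$ and a reindexing that reproduces the series for $\Phi_k$ with the $m=0$ term supplied by the boundary term. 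The paper avoids stochastic Fubini altogether: it tests both the integral equation and the candidate formula against random variables of the form $c+\int_0^t\xi\,\mathrm{d}B_\tau$ (It\^o's representation theorem), uses only It\^o's isometry to turn the stochastic terms into deterministic ones, and observes that both pairings solve the same deterministic fractional ODE (\ref{ChifracODE}), so they coincide by the uniqueness already established in Proposition \ref{wellpose}. What each buys: the paper's duality argument recycles the deterministic theory wholesale and sidesteps the interchange of a Lebesgue integral with an It\^o integral against a doubly singular kernel, while your direct verification establishes existence more transparently and makes the Mittag--Leffler kernel identity explicit, at the cost of justifying stochastic Fubini --- and you correctly isolate that this justification (integrability of $(t-\tau)^{2(\alpha-1)}(\tau-s)^{2(\alpha-1)}$) is exactly where $\alpha>1/2$ enters, just as it does in the paper's verification of (\ref{supE}).

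One small repair in your last step: bounding $\mathbb{E}|u_k^{\mathrm{sto}}(t)|^2$ by a multiple of $\|\sigma_k\|_{L^\infty}^2$ and then ``summing via Parseval'' is not quite right, since $\sum_k\sup_t|\sigma_k(t)|^2$ is not controlled by Parseval. Do as the paper does: keep the time integral, sum over $k$ inside it, and use $\sum_k|\sigma_k(\tau)|^2=\|\sigma(\tau)\|_{L^2(M)}^2$ pointwise in $\tau$ before integrating the singular factor $(t-\tau)^{2(\alpha-1)}$ over the compact support. This is a cosmetic fix, not a flaw in the approach.
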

	\begin{proof}
		We use $v_k(t)$ to denote the RHS of (\ref{stofracODEuk}). We will first show that 
		$u_k(t)= v_k(t)$. Note that by It\^o's representation theorem (see Theorem 4.3.3 in \cite{oksendal2013stochastic}), each $X\in L^2(\mathcal{F}_t, \mathbb{P})$ can be written as 
		$$X= \mathbb{E}X+ \int^t_0 \xi(\tau)\,\mathrm{d}B_\tau$$
		for some $\mathcal{F}_t$-adapted stochastic process $\xi$ s.t. 
		$$\mathbb{E}[\int^t_0|\xi(\tau)|^2\mathrm{d}\tau]
		< \infty.$$
		Hence, it suffices to show that 
		\begin{equation}\label{Euvid}
			\mathbb{E}[u_k(t)(c+\int^t_0 \xi(\tau)\,\mathrm{d}B_\tau)]=
			\mathbb{E}[v_k(t)(c+\int^t_0 \xi(\tau)\,\mathrm{d}B_\tau)]
		\end{equation}
		for each constant $c$ and real-valued process $\xi$. 
		
		Now we use $\chi_{\xi, c, k}(t)$ and $\tilde{\chi}_{\xi, c, k}(t)$ to denote the LHS and RHS of (\ref{Euvid}) respectively, and we define
		$$\kappa_{\xi, c, k}(t):= \mathbb{E}[f_k(t)(c+\int^t_0 \xi(\tau)\,\mathrm{d}B_\tau)].$$ 
		In fact, we can multiply both sides of (\ref{stofracODEint}) by $c+\int^t_0 \xi(\tau)\,\mathrm{d}B_\tau$ and take the expectation to obtain 
		$$\chi_{\xi, c, k}(t)= \frac{1}{\Gamma(\alpha)}\int^t_0 (t-\tau)^{\alpha-1}(i\lambda^\beta_k \chi_{\xi, c, k}(\tau)- i \kappa_{\xi, c, k}(\tau))\,\mathrm{d}\tau$$
		$$+ \frac{-i}{\Gamma(\alpha)}\mathbb{E}[(\int^t_0 (t-\tau)^{\alpha-1}\sigma_k(\tau)\,\mathrm{d}B_\tau)(\int^t_0 \xi(\tau)\,\mathrm{d}B_\tau)]$$
		$$= \frac{1}{\Gamma(\alpha)}\int^t_0 (t-\tau)^{\alpha-1}(i\lambda^\beta_k \chi_{\xi, c, k}(\tau)- i \kappa_{\xi, c, k}(\tau))\,\mathrm{d}\tau+\frac{-i}{\Gamma(\alpha)}(\int^t_0 (t-\tau)^{\alpha-1}\sigma_k(\tau) \mathbb{E}[\xi(\tau)]\,\mathrm{d}\tau,$$
		where the last identity above is ensured by It\^o's isometry. This implies that 
		$\chi_{\xi, c, k}$ solves the deterministic fractional ODE 
		\begin{equation}\label{ChifracODE}
			\left\{
			\begin{aligned}
				(i\partial_t^\alpha + \lambda^\beta_k) \chi_{\xi, c, k}(t)&= \kappa_{\xi, c, k}(t)+
				\sigma_k(t) \mathbb{E}[\xi(t)],\\
				\chi_{\xi, c, k}(0)&= 0.
			\end{aligned}
			\right.
		\end{equation}
		
		On the other hand, we can multiply the expression of $v_k(t)$ by $c+\int^t_0 \xi(\tau)\,\mathrm{d}B_\tau$. Again we take the expectation and use It\^o's isometry to obtain 
		$$\tilde{\chi}_{\xi, c, k}(t)= \int^t_0 -i(t-\tau)^{\alpha-1}
		E_{\alpha, \alpha}(i\lambda^\beta_k (t-\tau)^\alpha)\kappa_{\xi, c, k}(\tau)\,\mathrm{d}\tau$$
		$$+\mathbb{E}[(\int^t_0 -i(t-\tau)^{\alpha-1}
		E_{\alpha, \alpha}(i\lambda^\beta_k (t-\tau)^\alpha)\sigma_k(\tau)\,\mathrm{d}B_\tau)(\int^t_0 \xi(\tau)\,\mathrm{d}B_\tau)]$$
		$$= \int^t_0 -i(t-\tau)^{\alpha-1}
		E_{\alpha, \alpha}(i\lambda^\beta_k (t-\tau)^\alpha)\kappa_{\xi, c, k}(\tau)\,\mathrm{d}\tau$$
		$$+ \int^t_0 -i(t-\tau)^{\alpha-1}
		E_{\alpha, \alpha}(i\lambda^\beta_k (t-\tau)^\alpha)\sigma_k(\tau)\mathbb{E}[\xi(\tau)]\,\mathrm{d}\tau.
		$$
		Based on (\ref{fracODE}, \ref{Kfconvolute}, \ref{Kk}) in Proposition 3.1, we know that $\tilde{\chi}_{\xi, c, k}$ is the solution of
		(\ref{ChifracODE}) as well. Hence, we conclude that ${\chi}_{\xi, c, k}= \tilde{\chi}_{\xi, c, k}$.
		
		To verify the estimate (\ref{supE}), we use $u^D_k(t)$ and $u^I_k(t)$ to denote the deterministic integral and the It\^o integral in (\ref{stofracODEuk}). Suppose $\mathrm{supp}\,f\cup \mathrm{supp}\,\sigma \subset (0, T_0)\times M$ for some $T_0> 0$.
		By It\^o's isometry, the boundedness of $E_{\alpha, \alpha}$ on the imaginary axis and Cauchy-Schwarz inequality, we have
		$$\mathbb{E}[\,||u(t)||^2_{L^2(M)}\,]=
		\sum^\infty_{k=0}\mathbb{E}[|u_k(t)|^2]
		= \sum^\infty_{k=0}|u^D_k(t)|^2+
		\sum^\infty_{k=0}\mathbb{E}[|u^I_k(t)|^2]
		$$
		$$\leq CT^{2\alpha-1}_0\sum^\infty_{k=0}\int^{T_0}_0|f_k(\tau)|^2\,\mathrm{d}\tau+ \sum^\infty_{k=0}\int^t_0(t- \tau)^{2(\alpha-1)}|E_{\alpha, \alpha}(i\lambda^\beta_k (t-\tau)^\alpha)|^2|\sigma_k(\tau)|^2\,\mathrm{d}\tau$$
		$$\leq CT^{2\alpha-1}_0\int^{T_0}_0||f(\tau)||^2_{L^2(M)}\,\mathrm{d}\tau+ C'\int^t_0(t- \tau)^{2(\alpha-1)}||\sigma(\tau)||_{L^2(M)}^2\,\mathrm{d}\tau$$
		$$\leq CT^{2\alpha-1}_0\int^{T_0}_0||f(\tau)||^2_{L^2(M)}\,\mathrm{d}\tau+ C''T^{2\alpha-1}_0\sup_t ||\sigma(t)||_{L^2(M)}^2< \infty.$$
	\end{proof}
	
	Note that $\mathbb{E}[u^I_k(t)]= 0$ so the expectation of the solution of the stochastic problem (\ref{stofracSchro}) is exactly the same as the solution of the deterministic problem (\ref{fracSchro}). Hence, we can formulate the inverse problem and state Theorem 1.1 in the following probabilistic way.
	
	For given nonempty open subsets $W_1, W_2 \subset M$ and fixed deterministic deviation function $\sigma$, we define the source-to-solution map
	\begin{equation}\label{stosol2}
		L^{\alpha, \beta}_{W_1, W_2}: f\to u_f|_{W_2\times (0, \infty)}, \qquad \mathrm{supp} (f)\subset W_1\times (0, \infty),
	\end{equation}
	where the stochastic function $u_f$ is the solution of (\ref{stofracSchro}) corresponding to the mean of the source $f$. 
	
	\begin{thm}\label{Th2}
		Suppose $(M, g)$, $(M, \tilde{g})$ are smooth connected closed Riemannian manifolds. Suppose $\frac{1}{2}< \alpha, \tilde{\alpha}< 1$ and $0< \beta,  \tilde{\beta}< 1$. Let $W_1, W_2\subset M$ be nonempty disjoint open subsets. Suppose $W_1$ satisfies the spectral bound condition (\ref{specbdcond}) and $W_2$ has smooth boundary. We fix the deterministic function $\sigma\in C^2_c((0, \infty); L^2(W_1))$. Let $L^{\alpha, \beta}_{W_1, W_2}, \tilde{L}^{\tilde{\alpha}, \tilde{\beta}}_{W_1, W_2}$ be the source-to-solution maps corresponding to $g, \tilde{g}$.
		Suppose {$g= \tilde{g}$ in $W_1\cup W_2$} and
		\begin{equation}\label{Estosoleq}
			\mathbb{E}[L^{\alpha, \beta}_{W_1, W_2} f]= \mathbb{E}[\tilde{L}^{\tilde{\alpha}, \tilde{\beta}}_{W_1, W_2} f],\qquad
			\text{for }f\in C^2_c((0, \infty); L^2(W_1)).
		\end{equation}
		Then $\alpha= \tilde{\alpha}$, $\beta=  \tilde{\beta}$, and $(M, g)$ and $(M, \tilde{g})$ are isometric Riemannian manifolds.
	\end{thm}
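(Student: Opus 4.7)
The strategy is to take expectations everywhere and thereby reduce Theorem \ref{Th2} to the deterministic Theorem \ref{Th} already proved in Section 4. Since the hypothesis (\ref{Estosoleq}) is an equality of expected values of the stochastic source-to-solution maps, and since the noise enters (\ref{stofracSchro}) only through a fixed deterministic deviation $\sigma$ that is common to both sides, the mean of the stochastic solution should propagate exactly like the deterministic solution driven by $f$ alone.

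More concretely, starting from the explicit representation (\ref{stofracODEuk}) of each Fourier coefficient, I would write $u_k(t) = u_k^D(t) + u_k^I(t)$, where $u_k^D$ is the deterministic convolution involving $f_k$ and $u_k^I$ is the It\^o integral with the deterministic integrand $-i(t-\tau)^{\alpha-1}E_{\alpha,\alpha}(i\lambda_k^\beta (t-\tau)^\alpha)\sigma_k(\tau)$. Because $\alpha > 1/2$ and $E_{\alpha,\alpha}$ is bounded on the imaginary axis by Proposition \ref{BddAsym}, this integrand is square-integrable in $\tau$ on $(0,t)$, and the standard martingale property of the It\^o integral of a deterministic $L^2$ integrand yields $\mathbb{E}[u_k^I(t)] = 0$. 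The variance bound (\ref{supE}) established in the previous proposition ensures $\sum_k \mathbb{E}[|u_k(t)|^2] < \infty$, which by Fubini allows me to interchange expectation with the infinite sum and conclude
$$\mathbb{E}[u_f(\cdot, t)] \;=\; \sum_{k=0}^\infty u_k^D(t)\,\varphi_k.$$
The right-hand side is, however, exactly the unique solution of the deterministic problem (\ref{fracSchro}) with source $f$ constructed in Proposition \ref{wellpose}. Hence $\mathbb{E}[L_{W_1,W_2}^{\alpha,\beta}f]$ coincides with the deterministic source-to-solution map appearing in Theorem \ref{Th}.

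With this identification in hand, (\ref{Estosoleq}) reduces verbatim to (\ref{stosoleq}); the hypotheses on $W_1, W_2$, the smoothness of $\partial W_2$, the spectral bound condition, and the requirement $g = \tilde g$ on $W_1 \cup W_2$ are inherited directly; and the restricted range $\alpha, \tilde\alpha \in (1/2,1)$ still lies within the range $(0,1)$ needed by Theorem \ref{Th}. Applying Theorem \ref{Th} then immediately yields $\alpha = \tilde\alpha$, $\beta = \tilde\beta$ and the existence of a Riemannian isometry between $(M,g)$ and $(M,\tilde g)$. The only non-routine step is the interchange of expectation with the infinite Fourier sum, but that follows at once from the square-summability estimate (\ref{supE}) combined with Fubini, so I do not anticipate any serious obstacle beyond careful bookkeeping.
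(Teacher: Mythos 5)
Your proposal is correct and matches the paper's own route: the paper likewise observes that $\mathbb{E}[u^I_k(t)]=0$ (the It\^o integral of the deterministic, square-integrable integrand, using $\alpha>1/2$), so the expectation of the stochastic solution coincides with the deterministic solution of (\ref{fracSchro}), and Theorem \ref{Th2} reduces directly to Theorem \ref{Th}. Your extra care about interchanging expectation with the Fourier sum via (\ref{supE}) is a reasonable bookkeeping step that the paper leaves implicit.
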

	
	\bibliographystyle{plain}
	{\small\bibliography{Reference12}}
\end{document}